\newtheorem{theorem}{Theorem}[section]
\newtheorem{lemma}[theorem]{Lemma}
\theoremstyle{definition}
\newtheorem{example}[theorem]{Example}
 \theoremstyle{remark}
\newtheorem{remark}[theorem]{Remark}
 \numberwithin{equation}{section}
\begin{document}

\title[A  Trudinger-Moser inequality on  bounded and convex  planar domains]{ A  sharp Trudinger-Moser inequality on  any  bounded and convex   planar domain}

\author{Guozhen Lu}
\address{Department of Mathematics, Wayne State University, Detroit, MI 48202, USA}

\email{gzlu@wayne.edu}

\author{Qiaohua  Yang}
\address{School of Mathematics and Statistics, Wuhan University, Wuhan, 430072, People's Republic of China}

\email{qhyang.math@gmail.com}

\thanks{The first author was partly supported by a US NSF grant and a Simons Fellowship from the Simons Foundation. The second author was partly   supported by   the National Natural
Science Foundation of China (No.11201346).}


\subjclass[2000]{ 46E35; 35J20; 42B35;  }



\keywords{Trudinger-Moser inequality; Hyperbolic space; sharp constant; convex planar domain}

\begin{abstract}

Wang and Ye conjectured in \cite{wy}:

\medskip

\emph{Let
$\Omega$ be a regular, bounded and convex domain in $\mathbb{R}^{2}$.
There exists a finite constant $C({\Omega})>0$ such that
\[
\int_{\Omega}e^{\frac{4\pi u^{2}}{H_{d}(u)}}dxdy\le C(\Omega),\;\;\forall u\in C^{\infty}_{0}(\Omega),
\]
where $H_{d}=\int_{\Omega}|\nabla u|^{2}dxdy-\frac{1}{4}\int_{\Omega}\frac{u^{2}}{d(z,\partial\Omega)^{2}}dxdy$ and $d(z,\partial\Omega)=\min\limits_{z_{1}\in\partial\Omega}|z-z_{1}|$.}\\

The main purpose of this paper is to confirm that this conjecture indeed holds for any bounded and convex domain in $\mathbb{R}^{2}$  via  the Riemann mapping theorem (the smoothness of the boundary of the domain is thus irrelevant).

 We also give a rearrangement-free argument for the following  Trudinger-Moser inequality  on the hyperbolic space $\mathbb{B}=\{z=x+iy:|z|=\sqrt{x^{2}+y^{2}}<1\}$:
\[
\sup_{\|u\|_{\mathcal{H}}\leq 1} \int_{\mathbb{B}}(e^{4\pi u^{2}}-1-4\pi u^{2})dV=\sup_{\|u\|_{\mathcal{H}}\leq 1}\int_{\mathbb{B}}\frac{(e^{4\pi u^{2}}-1-4\pi u^{2})}{(1-|z|^{2})^{2}}dxdy< \infty,
\]
by using the method employed earlier by Lam and the first author \cite{ll, ll2}, where $\mathcal{H}$ denotes the closure of $C^{\infty}_{0}(\mathbb{B})$ with respect to the
norm $$\|u\|_{\mathcal{H}}=\int_{\mathbb{B}}|\nabla u|^{2}dxdy-\int_{\mathbb{B}}\frac{u^{2}}{(1-|z|^{2})^{2}}dxdy.$$
Using this strengthened Trudinger-Moser inequality, we also give a simpler proof of the  Hardy-Moser-Trudinger inequality obtained by  Wang and  Ye \cite{wy}.

\end{abstract}

\maketitle


\section{Introduction}

 As a borderline case of the Sobolev embedding   $W^{1,p}_0(\Omega)\subset L^q(\Omega)$ where $p<N$ when $\Omega\subset \mathbb{R}^N$ $  (N\ge2)$ is a bounded domain with $1\le q\le\frac{Np}{N-p}$,  Trudinger \cite{t} proved   that $W^{1, N}_0(\Omega)\subset L_{\varphi_N}(\Omega)$,  where $L_{\varphi_N}(\Omega)$ is the Orlicz space associated with the Young function $\varphi_N(t)=\exp(\beta|t|^{N/N-1})-1$ for some $\beta>0$ (see also Yudovich \cite{Yu}, Pohozaev \cite{Po}).  J. Moser proved the following sharp result in his 1971 paper \cite{Mo}:\\
\\
\textbf{Theorem A.} \emph{Let $\Omega$ be a domain with finite measure in Euclidean N-space $\mathbb{R}^N$, $N\ge2$. Then there exists a sharp constant $\alpha_{N}=N\left(
\frac{N\pi^{\frac{N}{2}}}{\Gamma(\frac{N}{2}+1)}\right)  ^{\frac{1}{N-1}}%
$ such that
\begin{displaymath}
\frac{1}{|\Omega|}\int_{\Omega}\exp(\beta|u|^{\frac{N}{N-1}})dx\le c_0
\end{displaymath}
for any $\beta\le\alpha_N$, any $u\in W^{1,N}_0(\Omega)$ with $\int_{\Omega}|\nabla u|^N dx\le1$. This constant $\alpha_N$ is sharp in the sense that if $\beta>\alpha_N$, then the above inequality can no longer hold with some $c_0$ independent of $u$}.\\

  There have been many generalizations related to the Trudinger-Moser inequality on hyperbolic spaces (see \cite{ms}, \cite{l1}, \cite{l2}, \cite{mst}). For instance,  Mancini and Sandeep \cite{ms} (see also \cite{at})
proved the following improved Trudinger-Moser inequalities on $\mathbb{B}=\{z=x+iy:|z|=\sqrt{x^{2}+y^{2}}<1\}$:
\[
\sup_{u\in W^{1,2}_{0}(\mathbb{B}),
\int_{\mathbb{B}}|\nabla
u|^{2}dxdy\leq1}\int_{\mathbb{B}}\frac{e^{4\pi
u^{2}}-1}{(1-|z|^{2})^{2}}dxdy<\infty.
\]

 In \cite{l1, l2}, the first author and Tang established independently different type of sharp   Trudinger-Moser  inequalities from \cite{mst} on the high dimensional hyperbolic spaces. Since our main focus of this paper is on the Trudinger-Moser inequality on two dimensional case, we will not discuss further here.

\medskip

 Wang and Ye \cite{wy} proved, among other results, an improved Trudinger-Moser inequality by
combining the Hardy inequality. Their result is the following
\begin{theorem}
There exists a finite constant $C_{1}>0$ such that
\[
\int_{\mathbb{B}}e^{\frac{4\pi u^{2}}{\|u\|_{\mathcal{H}}}}dxdy\le C_{1},\;\;\forall u\in C^{\infty}_{0}(\mathbb{B}),
\]
where $\|u\|_{\mathcal{H}}=\int_{\mathbb{B}}|\nabla u|^{2}dxdy-\int_{\mathbb{B}}\frac{u^{2}}{(1-|z|^{2})^{2}}dxdy$.
\end{theorem}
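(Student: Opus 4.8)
The plan is to deduce Theorem 1.1 from the strengthened Trudinger--Moser inequality on the disk announced in the abstract, namely that
\[
\int_{\mathbb{B}}\bigl(e^{4\pi u^{2}}-1-4\pi u^{2}\bigr)\,dV\le C,\qquad \|u\|_{\mathcal H}\le 1,
\]
where $dV=\dfrac{dxdy}{(1-|z|^{2})^{2}}$. Since $\|u\|_{\mathcal H}$ is quadratic (hence $2$-homogeneous), after replacing $u$ by $u/\|u\|_{\mathcal H}^{1/2}$ it suffices to prove $\int_{\mathbb B}e^{4\pi u^{2}}\,dxdy\le C_{1}$ for every $u\in C^{\infty}_{0}(\mathbb B)$ with $\|u\|_{\mathcal H}=1$. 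I would then peel off the quadratic Taylor part of the exponential,
\[
\int_{\mathbb B}e^{4\pi u^{2}}\,dxdy=\int_{\mathbb B}\bigl(e^{4\pi u^{2}}-1-4\pi u^{2}\bigr)\,dxdy+\int_{\mathbb B}dxdy+4\pi\int_{\mathbb B}u^{2}\,dxdy .
\]
The very point of subtracting $1+4\pi u^{2}$ in the hyperbolic inequality is that these are exactly the two terms that are not integrable against $dV$; they must now be recovered by the flat area and the flat $L^{2}$ norm.

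For the first (nonlinear) integral I would use that $0<(1-|z|^{2})^{2}\le 1$ on $\mathbb B$, so that $dxdy\le dV$. Because the integrand $e^{4\pi u^{2}}-1-4\pi u^{2}$ is pointwise nonnegative, the strengthened hyperbolic inequality applies directly:
\[
\int_{\mathbb B}\bigl(e^{4\pi u^{2}}-1-4\pi u^{2}\bigr)\,dxdy\le\int_{\mathbb B}\bigl(e^{4\pi u^{2}}-1-4\pi u^{2}\bigr)\,dV\le C .
\]
The second integral is simply the Euclidean area $\int_{\mathbb B}dxdy=\pi$. Everything therefore reduces to bounding the third integral $\int_{\mathbb B}u^{2}\,dxdy$ by a constant multiple of $\|u\|_{\mathcal H}=1$.

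The main obstacle is precisely this last estimate, a Hardy--Poincar\'e inequality $\int_{\mathbb B}u^{2}\,dxdy\le C\|u\|_{\mathcal H}$. It cannot come from a naive comparison: since $\tfrac{1}{(1-|z|^{2})^{2}}\ge 1$ one only gets $\int_{\mathbb B}u^{2}\,dxdy\le\int_{\mathbb B}\frac{u^{2}}{(1-|z|^{2})^{2}}\,dxdy\le\int_{\mathbb B}|\nabla u|^{2}\,dxdy$, which controls $\int_{\mathbb B}u^{2}\,dxdy$ by the \emph{full} Dirichlet energy rather than by the small difference $\|u\|_{\mathcal H}$, and this difference can be arbitrarily small. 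The correct mechanism is a ground-state (Agmon) substitution. By conformal invariance of the Dirichlet integral in two dimensions one has $\|u\|_{\mathcal H}=\int_{\mathbb B}|\nabla_{\mathbb H}u|^{2}\,dV-\int_{\mathbb B}u^{2}\,dV$, which is the quadratic form of $-\Delta_{\mathbb H}$ shifted by the bottom of its spectrum. Writing $u=\phi v$ with $\phi>0$ an explicit approximate ground state of $-\Delta_{\mathbb H}$ (a power of $1-|z|^{2}$), integration by parts yields
\[
\|u\|_{\mathcal H}=\int_{\mathbb B}\phi^{2}\,\bigl|\nabla_{\mathbb H}(u/\phi)\bigr|^{2}\,dV+\int_{\mathbb B}V\,u^{2}\,dV,
\qquad V=\frac{-\Delta_{\mathbb H}\phi}{\phi}-\lambda_{0},
\]
where $\lambda_{0}$ is the spectral bottom and $V$ is a nonnegative potential satisfying $V\ge c\,(1-|z|^{2})^{2}$ on all of $\mathbb B$. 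Discarding the first nonnegative term gives $\|u\|_{\mathcal H}\ge c\int_{\mathbb B}(1-|z|^{2})^{2}u^{2}\,dV=c\int_{\mathbb B}u^{2}\,dxdy$; the decay of the weight $(1-|z|^{2})^{2}$ at $\partial\mathbb B$ is exactly what keeps $V$ bounded below by it despite the absence of a gap in $L^{2}(dV)$. Combining the three estimates yields $\int_{\mathbb B}e^{4\pi u^{2}}\,dxdy\le C+\pi+\tfrac{4\pi}{c}=:C_{1}$, which is the asserted inequality.
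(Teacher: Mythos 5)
Your proposal is correct and follows essentially the same route as the paper, which likewise deduces Theorem 1.1 from the strengthened inequality (Theorem 1.2) via the identical decomposition $\int_{\mathbb B}e^{4\pi u^{2}}dxdy=\int_{\mathbb B}(e^{4\pi u^{2}}-1-4\pi u^{2})dxdy+|\mathbb B|+4\pi\int_{\mathbb B}u^{2}dxdy$, the pointwise comparison $dxdy\le (1-|z|^{2})^{-2}dxdy$, and the improved Hardy inequality $\|u\|_{\mathcal H}\ge C\int_{\mathbb B}u^{2}dxdy$. The only difference is that the paper simply cites this Hardy--Poincar\'e step (Brezis--Marcus, Wang--Ye), whereas you prove it in-line by ground-state substitution, and your sketch does check out: taking $\phi=(1-|z|^{2})^{1/2}$ gives $\|u\|_{\mathcal H}=\int_{\mathbb B}\phi^{2}\,|\nabla(u/\phi)|^{2}\,dxdy+\int_{\mathbb B}\frac{u^{2}}{1-|z|^{2}}\,dxdy\ge\int_{\mathbb B}u^{2}\,dxdy$.
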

We note that the  proof of Theorem 1.1 in \cite{wy} depends
on Schwartz rearrangement argument.
In the same paper, they conjecture that such Hardy-Moser-Trudinger inequality holds for bounded and convex domains with smooth boundary:

\medskip

\textbf{Conjecture} (\cite{wy})
\emph{Let
$\Omega$ be a regular, bounded and convex domain in $\mathbb{R}^{2}$.
There exists a finite constant $C({\Omega})>0$ such that
\[
\int_{\Omega}e^{\frac{4\pi u^{2}}{H_{d}(u)}}dxdy\le C(\Omega),\;\;\forall u\in C^{\infty}_{0}(\Omega),
\]
where $H_{d}=\int_{\Omega}|\nabla u|^{2}dxdy-\frac{1}{4}\int_{\Omega}\frac{u^{2}}{d(z,\partial\Omega)^{2}}dxdy$ and $d(z,\partial\Omega)=\min\limits_{z_{1}\in\partial\Omega}|z-z_{1}|$.}\\

Using Theorem1.1,  Mancini,  Sandeep and Tintarev \cite{mst} proved, among other results, the following strengthened Trudinger-moser inequality on $\mathbb{B}$
and their proof also depends on symmetrization argument.
\begin{theorem}
There exists a constant $C_{2}$ such that for all
$u\in C^{\infty}_{0}(\mathbb{B})$ with
\[
\|u\|_{\mathcal{H}}=\int_{\mathbb{B}}|\nabla u|^{2}dxdy-\int_{\mathbb{B}}\frac{u^{2}}{(1-|z|^{2})^{2}}dxdy\leq1,
\]
there holds
\[
\int_{\mathbb{B}}\frac{(e^{4\pi u^{2}}-1-4\pi u^{2})}{(1-|z|^{2})^{2}}dxdy\leq C_{2}.
\]
\end{theorem}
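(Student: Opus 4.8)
The plan is to reformulate the statement intrinsically on the hyperbolic plane and then run a level-set (distribution-function) argument in the spirit of Lam and the first author, so that no symmetric rearrangement of $u$ is ever formed. Writing the disk-model metric as $g=\lambda^2(dx^2+dy^2)$ with $\lambda=2/(1-|z|^2)$, the Dirichlet integral is conformally invariant, $\int_{\mathbb B}|\nabla u|^2\,dxdy=\int_{\mathbb B}|\nabla_g u|_g^2\,dV_g$, while $\int_{\mathbb B}\frac{u^2}{(1-|z|^2)^2}\,dxdy=\frac14\int_{\mathbb B}u^2\,dV_g$ and $\int_{\mathbb B}\frac{(\cdots)}{(1-|z|^2)^2}\,dxdy=\frac14\int_{\mathbb B}(\cdots)\,dV_g$. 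Hence the hypothesis reads $\|u\|_{\mathcal H}=\int_{\mathbb B}|\nabla_g u|_g^2\,dV_g-\frac14\int_{\mathbb B}u^2\,dV_g\le 1$, the hyperbolic Dirichlet energy corrected by the bottom $\frac14$ of the $L^2$-spectrum of $-\Delta_{\mathbb H^2}$, and the claim becomes $\int_{\mathbb B}(e^{4\pi u^2}-1-4\pi u^2)\,dV_g\le 4C_2$. I may assume $u\ge 0$ (replacing $u$ by $|u|$ leaves $\|u\|_{\mathcal H}$ unchanged).

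Set $\mu(t)=|\{u>t\}|_g$. The one geometric input, applied to $u$ itself rather than to a rearrangement, is the chain co-area formula $+$ Cauchy-Schwarz $+$ the sharp hyperbolic isoperimetric inequality $\mathrm{Per}_g(E)^2\ge 4\pi|E|_g+|E|_g^2$: since $\int_{\{u=t\}}|\nabla_g u|\,d\sigma_g\ge \mathrm{Per}_g(\{u>t\})^2/(-\mu'(t))$, integrating in $t$ gives
\[
\int_{\mathbb B}|\nabla_g u|_g^2\,dV_g\ \ge\ \int_0^\infty\frac{4\pi\mu(t)+\mu(t)^2}{-\mu'(t)}\,dt .
\]
With the layer-cake identities $\int u^2\,dV_g=\int_0^\infty 2t\mu\,dt$ and $\int(e^{4\pi u^2}-1-4\pi u^2)\,dV_g=\int_0^\infty 8\pi t(e^{4\pi t^2}-1)\mu\,dt$, the whole problem collapses to a one-dimensional statement about the single decreasing profile $\mu$. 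The decisive feature is the extra curvature term $+\mu^2$, which has no Euclidean analogue and which will absorb the subtracted mass $\frac14\int u^2$ and, more delicately, control the total superlevel area $M:=\mu(0)=|\{u>0\}|_g$ that the hypothesis does not bound directly.

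To expose this I would normalize following Moser: with $s=\log(M/\mu(t))$ and $\psi(s)=\sqrt{4\pi}\,t(s)$ (so $\psi(0)=0$), the displayed estimate turns the hypothesis into
\[
\int_0^\infty(\psi')^2\,ds+\frac{M}{16\pi}\int_0^\infty\big(4(\psi')^2-\psi^2\big)e^{-s}\,ds\le 1,
\]
where the second term is nonnegative by the weighted one-dimensional Hardy inequality $\int(\psi')^2e^{-s}\ge\frac14\int\psi^2e^{-s}$ and is exactly the isoperimetric surplus; in particular $\int_0^\infty(\psi')^2\,ds\le 1$, so $\psi^2\le s$, and $M\int_0^\infty(4(\psi')^2-\psi^2)e^{-s}\,ds\le 16\pi$. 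After an integration by parts the target becomes $M\int_0^\infty(e^{\psi^2}-1-\psi^2)e^{-s}\,ds$. Thus everything reduces to the scalar inequality
\[
\int_0^\infty(e^{\psi^2}-1-\psi^2)e^{-s}\,ds\ \le\ C\int_0^\infty\big(4(\psi')^2-\psi^2\big)e^{-s}\,ds,\qquad \psi(0)=0,\ \int_0^\infty(\psi')^2\,ds\le 1,
\]
from which $\int(e^{4\pi u^2}-1-4\pi u^2)\,dV_g=M\int(e^{\psi^2}-1-\psi^2)e^{-s}\le CM\int(4(\psi')^2-\psi^2)e^{-s}\le 16\pi C$.

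The main obstacle is precisely this last one-dimensional inequality, and it is where the two features of the statement earn their keep. For small $\psi$ the left-hand side is $\sim\frac12\int\psi^4e^{-s}$, so the subtraction $-1-4\pi u^2$ is what lets the low, flat, large-area regime be governed by a weighted one-dimensional Sobolev estimate against the Hardy deficit on the right; for $\psi$ near its critical profile $\psi^2\approx s$ the left-hand side is controlled by Moser's lemma $\int_0^\infty e^{\psi^2-s}\,ds\le C_0$ (legitimate since $\int(\psi')^2\le1$), while the same constraint forces such a near-critical profile to persist only on a bounded $s$-range. Reconciling the two regimes with a single constant, equivalently retaining the sharp exponent $4\pi$, is the delicate point; the remaining items, namely a.e. regularity of the level sets via Sard's theorem, approximation of general $u\in C^\infty_0(\mathbb B)$, and the degenerate case where the Hardy deficit vanishes (forcing $u\equiv 0$), are routine.
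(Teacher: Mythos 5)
Your reduction is carried out correctly up to the final display: the coarea--Cauchy--Schwarz--isoperimetric step, the Moser normalization $s=\log(M/\mu)$, $\psi=\sqrt{4\pi}\,t(s)$, and the identities turning the hypothesis into $\int_0^\infty(\psi')^2ds+\frac{M}{16\pi}\int_0^\infty(4(\psi')^2-\psi^2)e^{-s}ds\le 1$ and the target into $M\int_0^\infty(e^{\psi^2}-1-\psi^2)e^{-s}ds$ all check out. But the scalar inequality to which you claim ``everything reduces,''
\[
\int_0^\infty(e^{\psi^2}-1-\psi^2)e^{-s}\,ds\ \le\ C\int_0^\infty\bigl(4(\psi')^2-\psi^2\bigr)e^{-s}\,ds
\quad\text{under }\psi(0)=0,\ \int_0^\infty(\psi')^2ds\le 1,
\]
is not merely delicate; it is \emph{false}. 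Take the ramp--plateau profile $\psi_T(s)=\bigl(1-\tfrac1T\bigr)\min(s,T)/\sqrt{T}$. Then $\int(\psi_T')^2ds=(1-\tfrac1T)^2<1$, and the right-hand side integral is, using $\int_0^\infty s^2e^{-s}ds=2$, of order $2/T$ (the plateau contributes only $-O(Te^{-T})$), so it tends to $0$. On the plateau, $\psi_T^2-s$ at $s=T$ equals $-2+\tfrac1T$, whence the left-hand side is at least $\int_T^\infty e^{\psi_T(T)^2-s}ds-o(1)=e^{-2+1/T}-o(1)$, bounded below by a fixed positive constant. So the ratio of the two sides is of order $T$ and no uniform $C$ exists; these profiles are precisely the truncated Moser functions in $s$-coordinates, so they cannot be excluded from your class. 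The theorem survives only because $M$ and $\psi$ are \emph{coupled} through the hypothesis: for these profiles $1-\|\psi'\|_{2}^{2}\sim 2/T\sim D$ where $D$ is the deficit, so the constraint forces $M\lesssim 16\pi(1-\|\psi'\|_2^2)/D=O(T)$ and $M\cdot(\text{LHS})$ stays bounded even as $(\text{LHS})/D\to\infty$. The decoupling you perform --- bounding $M\,D\le 16\pi$ and the ratio $(\text{LHS})/D$ separately --- is exactly the step that fails; the correct one-dimensional statement is the two-constraint problem in the pair $(\psi,M)$, and proving it amounts to redoing the hard part of the Wang--Ye analysis, not a routine reconciliation of two regimes.

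For comparison, the paper bypasses this variational problem entirely. It splits $\mathbb{B}$ into $\Omega(u)=\{|u|\ge1\}$ and its complement; Beckner's inequality $\|u\|_{L^4}^2\le 2\pi^{-1/2}\|u\|_{\mathcal H}$ (inequality (\ref{1.8})) bounds both $|\Omega(u)|$ and the series $\sum_{n\ge2}(4\pi)^n u^{4}/n!$ on $\{|u|<1\}$ --- this is where the subtraction of $1+4\pi u^2$ is actually used. On $\Omega(u)$, whose measure is now a priori bounded, it writes $u=(-\Delta_{\mathbb H}-\tfrac14)^{-1/2}v$ with $\|v\|_{L^2}\le1$, estimates the kernel $\phi(\rho)$ via the heat kernel (Lemma 3.1, Lemma 3.2), and concludes by O'Neil's inequality plus Adams' one-dimensional lemma (Lemma 4.1). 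If you wish to salvage your approach, you must keep $M$ inside the one-dimensional problem; otherwise the potential-representation route is the one that closes.
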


 Recently, Lam and the first author \cite{ll} develop a new approach to establish sharp Trudinger-Moser inequalities in unbounded domains in the settings (e.g.  Heisenberg groups) where the classical symmetrization argument does not work.
Such an approach   avoids using the
rearrangement argument which is not available in an optimal way on the Heisenberg group and can be used
in other settings such as high order Sobolev spaces, hyperbolic spaces, non-compact and complete Riemanian manifolds,  etc (see e.g.  \cite{LiLu},  \cite{ll2}, \cite{l2},  \cite{ysk}).

\medskip

One of the aims of this paper is that, in the spirit of \cite{ll,ll2}, we give a  new approach to establish the strengthened Trudinger-Moser inequality and Hardy-Moser-Trudinger inequality on $\mathbb{B}$. Our approach is much simpler and also avoids using the
rearrangement argument.

\medskip

The second  and main aim of this paper is that, using the strengthened Trudinger-Moser inequality, we  give an affirmant answer to the conjecture
given by   Wang and  Ye via Riemann mapping theorem.
The main result of our paper is the following
\begin{theorem}
Let $\Omega$ be a proper and convex domain in $\mathbb{R}^{2}$ and $u\in C^{\infty}_{0}(\Omega)$ be such that
\[
\int_{\Omega}|\nabla_{} u|^{2}dxdy-\frac{1}{4}\int_{\Omega}\frac{u^{2}}{d(z,\partial\Omega)^{2}}dxdy\leq1.
\]
Then there exists a constant $C_{3}$ which is independent of $u$  such that
\[
\int_{\Omega}\frac{e^{4\pi u^{2}}-1-4\pi u^{2}}{d(z,\partial\Omega)^{2}}dxdy\leq C_{3}.
\]
Furthermore, if  $\Omega$ is  bounded,
then there exists a constant $C_{4}$ which is independent of $u$ such that
\[
\int_{\Omega}e^{4\pi u^{2}}dxdy\leq C_{4}.
\]
\end{theorem}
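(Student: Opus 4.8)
The plan is to transplant the problem from the convex domain $\Omega$ to the unit disk $\mathbb{B}$ by means of the Riemann mapping theorem and then to invoke the strengthened Trudinger--Moser inequality on $\mathbb{B}$ (the inequality with constant $C_2$ stated above). Since $\Omega$ is proper and convex, it is simply connected and $\Omega\neq\mathbb{R}^2$, so there is a conformal bijection $f:\mathbb{B}\to\Omega$. Given $u\in C^\infty_0(\Omega)$, I would set $v=u\circ f$; because $f$ is a homeomorphism, $\operatorname{supp}v=f^{-1}(\operatorname{supp}u)$ is compact in $\mathbb{B}$, so $v\in C^\infty_0(\mathbb{B})$ and the strengthened inequality applies to it verbatim. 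The two structural facts I would exploit are the conformal invariance of the Dirichlet integral in dimension two, $\int_\Omega|\nabla u|^2\,dxdy=\int_{\mathbb{B}}|\nabla v|^2\,dxdy$, and the change of variables $w=f(z)$, under which the Euclidean area element acquires the Jacobian $|f'(z)|^2$.

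The heart of the argument, and the only point at which the sharpness of the constant relies on convexity, is a pointwise comparison of the weight $d(f(z),\partial\Omega)^{-2}$ with the hyperbolic weight $(1-|z|^2)^{-2}$. Here I would use the classical fact that for a \emph{convex} domain the hyperbolic density dominates $1/d$; in terms of the Riemann map this reads
\[
d(f(z),\partial\Omega)\ \ge\ \tfrac12\,(1-|z|^2)\,|f'(z)|,\qquad z\in\mathbb{B},
\]
with equality in the model case of a half-plane. Squaring gives $|f'(z)|^2\,d(f(z),\partial\Omega)^{-2}\le 4(1-|z|^2)^{-2}$. Feeding this into the change of variables yields, on the one hand,
\[
\tfrac14\int_\Omega\frac{u^2}{d(z,\partial\Omega)^2}\,dxdy
=\tfrac14\int_{\mathbb{B}}\frac{v^2\,|f'|^2}{d(f(z),\partial\Omega)^2}\,dxdy
\le\int_{\mathbb{B}}\frac{v^2}{(1-|z|^2)^2}\,dxdy,
\]
so that, combining with the invariance of the Dirichlet energy,
\[
\|v\|_{\mathcal{H}}
=\int_{\mathbb{B}}|\nabla v|^2\,dxdy-\int_{\mathbb{B}}\frac{v^2}{(1-|z|^2)^2}\,dxdy
\ \le\ \int_\Omega|\nabla u|^2\,dxdy-\tfrac14\int_\Omega\frac{u^2}{d(z,\partial\Omega)^2}\,dxdy\ \le\ 1 .
\]
On the other hand, the same pointwise bound transports the (nonnegative) integrand back:
\[
\int_\Omega\frac{e^{4\pi u^2}-1-4\pi u^2}{d(z,\partial\Omega)^2}\,dxdy
\le 4\int_{\mathbb{B}}\frac{e^{4\pi v^2}-1-4\pi v^2}{(1-|z|^2)^2}\,dxdy .
\]
Since $\|v\|_{\mathcal{H}}\le 1$, the strengthened Trudinger--Moser inequality (with constant $C_2$) bounds the right-hand side by $4C_2$, which gives the first assertion with $C_3=4C_2$.

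For the second assertion I would split $\int_\Omega e^{4\pi u^2}=\int_\Omega(e^{4\pi u^2}-1-4\pi u^2)+|\Omega|+4\pi\int_\Omega u^2$. When $\Omega$ is bounded, $d(z,\partial\Omega)\le R:=\operatorname{diam}\Omega$, so the nonnegativity of $e^{4\pi u^2}-1-4\pi u^2$ together with $d^{-2}\ge R^{-2}$ gives $\int_\Omega(e^{4\pi u^2}-1-4\pi u^2)\le R^2 C_3$. The linear term is controlled by the elementary inequality $t\le (e^t-1-t)+\tfrac12$ for $t\ge0$ (equivalently $\tfrac12(t-1)^2\ge0$) applied to $t=4\pi u^2$, which yields $4\pi\int_\Omega u^2\le R^2 C_3+\tfrac12|\Omega|$. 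Hence $\int_\Omega e^{4\pi u^2}\le 2R^2C_3+\tfrac32|\Omega|=:C_4$, a constant depending only on $\Omega$.

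The main obstacle is the sharp geometric comparison $d(f(z),\partial\Omega)\ge\frac12(1-|z|^2)|f'(z)|$: it is exactly here that convexity is indispensable, since for a general simply connected domain the Koebe one-quarter theorem only supplies the factor $\frac14$, which would degrade the Hardy constant $\frac14$ and sever the link to the $C_2$-inequality. Establishing (or citing) this convex estimate, and verifying that the transplanted function genuinely lies in $C^\infty_0(\mathbb{B})$ so that the strengthened inequality applies without modification, are the two points requiring care; the remaining steps are routine changes of variables and elementary estimates.
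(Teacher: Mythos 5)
Your proof is correct and its core is the same as the paper's: transplant via the Riemann mapping theorem and reduce the weight comparison to the pointwise estimate $d(f(z),\partial\Omega)\ge\frac12(1-|z|^2)|f'(z)|$, which (writing $F=f^{-1}$) is exactly the paper's inequality $\frac{|F'(z_0)|}{1-|F(z_0)|^2}\ge\frac{1}{2d(z_0,\partial\Omega)}$, then feed both the energy constraint and the exponential integrand through the change of variables and invoke Theorem 1.2. Two differences are worth noting. First, where you cite the convex hyperbolic-density estimate as a classical fact (correctly flagging it as the crux, and correctly observing that Koebe's quarter theorem would only give the constant $\frac14$ for general simply connected domains), the paper proves it from scratch: convexity places $\Omega$ in the half-plane $H_{z_0}$ through the nearest boundary point, an explicit conformal map $f_{z_0}$ of that half-plane into $\mathbb{B}$ has $f'_{z_0}(z_0)=\frac{1}{2d(z_0,\partial\Omega)}$, and the extremal property of the Riemann map (Remark 2.4, after composing $F$ with the M\"obius automorphism $\psi_{F(z_0)}$ to normalize) yields $G'(z_0)\ge f'_{z_0}(z_0)$; in a self-contained write-up you would want to include this argument or a precise reference. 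Second, for the bounded case the paper controls $4\pi\int_\Omega u^2$ by citing the improved Hardy inequality $\int_\Omega|\nabla u|^2-\frac14\int_\Omega\frac{u^2}{d^2}\ge C_6\int_\Omega u^2$ of Brezis--Marcus type, whereas your elementary pointwise bound $t\le(e^t-1-t)+\frac12$ for $t\ge0$ applied to $t=4\pi u^2$ derives the same control directly from the first assertion; this is a genuine (if small) simplification that removes an external citation and gives the explicit constant $C_4=2R^2C_3+\frac32|\Omega|$. All the intermediate steps you sketch (conformal invariance of the Dirichlet integral, $v\in C^\infty_0(\mathbb{B})$, the two changes of variables) check out.
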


\section{conformal map}
Let $U$ and $V$ be two open sets in $\mathbb{C}$. A bijection holomorphic function $f:U\rightarrow V$ is called a conformal map or a biholomorphism. We say that $U$ and $V$ are conformally equivalent or simply biholomorphic if such a conformal map $f$ exists.  Here we give  a number of special examples of conformal mappings.

\begin{example}
 Denote by $\mathbb{H}$ the upper half-plane which consists of those complex numbers with positive imaginary parts; that is
\[
\mathbb{H}=\{z=x+iy\in\mathbb{C}:y>0\}.
\]
It is known that $\mathbb{H}$ is conformally equivalent to the unit disc $\mathbb{B}$. In fact, $f:\mathbb{H}\rightarrow \mathbb{B}$ defined by
\[
f(z)=\frac{z-i}{z+i}.
\]
is a conformal map.
\end{example}

\begin{example}
For each $\alpha \in\mathbb{C}$ with $|\alpha|<1$, denote by
\[
\psi_{\alpha}(z)=\frac{\alpha-z}{1-\overline{\alpha}z}.
\]
Then $\psi_{\alpha}:\mathbb{B}\rightarrow\mathbb{B}$ is a conformal map. It is easy to check $\psi_{\alpha}(\alpha)=0$ and $\psi_{\alpha}(0)=\alpha$. Furthermore,   if $f$ is an automorphism of $\mathbb{B}$,
then there exists $\theta\in \mathbb{R}$ and $\alpha\in \mathbb{B}$ such that $f=e^{i\theta}\psi_{\alpha}$.

\end{example}

Now we recall the Riemann mapping theorem, which states that any non-empty open simply connected proper subset of $\mathbb{C}$ admits a bijective conformal map to the open unit disk $\mathbb{B}$. We state it as follows:
\begin{theorem}
Suppose $\Omega$ is proper and simply connected. If $z_{0}\in \Omega$, then there exists a unique conformal map $F:\Omega\rightarrow \mathbb{B}$ such that
\[F(z_{0})=0,\;\;\;\;\textrm{and}\;\;\;\;F'(z_{0})>0.\]
\end{theorem}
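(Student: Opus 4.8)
The plan is to establish uniqueness by a rigidity argument and existence by solving an extremal problem over a normal family, following the classical Koebe--Carath\'eodory method. For uniqueness, suppose $F_1$ and $F_2$ both satisfy the stated normalization. Then $g=F_1\circ F_2^{-1}:\mathbb{B}\to\mathbb{B}$ is an automorphism of the disk fixing the origin, so by the classification in Example 2.2 (equivalently, by applying the Schwarz lemma to $g$ and $g^{-1}$) it is a rotation $g(w)=e^{i\theta}w$; the requirement $F'(z_0)>0$ forces $e^{i\theta}=1$, hence $F_1=F_2$.

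For existence, I would introduce the family
\[
\mathcal{F}=\{f:\Omega\to\mathbb{B}\ :\ f\text{ holomorphic and injective},\ f(z_0)=0,\ f'(z_0)>0\}.
\]
First I would check $\mathcal{F}\neq\emptyset$. Since $\Omega$ is a proper subset of $\mathbb{C}$, pick $a\notin\Omega$; because $\Omega$ is simply connected and $z-a$ is nonvanishing on $\Omega$, a single-valued branch $h(z)=\sqrt{z-a}$ exists and is injective. Its image cannot meet both $w$ and $-w$, so it omits a whole disk; composing $h$ with a suitable M\"obius transformation of the plane lands it inside $\mathbb{B}$, and a final automorphism $\psi_\alpha$ from Example 2.2 together with a rotation normalizes the value and derivative at $z_0$, producing an element of $\mathcal{F}$.

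Next I would solve the extremal problem $s:=\sup_{f\in\mathcal{F}}f'(z_0)$. The maps in $\mathcal{F}$ are uniformly bounded, so by Montel's theorem $\mathcal{F}$ is a normal family; taking a sequence with $f_n'(z_0)\to s$ and a locally uniformly convergent subsequence yields a holomorphic limit $F$ with $F'(z_0)=s$, which is finite by the Cauchy estimates and strictly positive (since $\mathcal{F}$ is nonempty), so $F$ is nonconstant. By Hurwitz's theorem a locally uniform limit of injective holomorphic functions is either injective or constant, hence $F$ is injective; clearly $F(z_0)=0$ and $|F|\le 1$, and since $F$ is nonconstant the open mapping theorem gives $F(\Omega)\subset\mathbb{B}$, so $F\in\mathcal{F}$.

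The main obstacle, and the heart of the argument, is to prove that this extremal $F$ is surjective. Suppose not, and let $w_0\in\mathbb{B}\setminus F(\Omega)$. Then $\psi_{w_0}\circ F$ maps $\Omega$ into $\mathbb{B}$ and omits the value $\psi_{w_0}(w_0)=0$, so on the simply connected domain $\Omega$ it admits a single-valued holomorphic square root $G$. Post-composing $G$ with a further automorphism $\psi_\beta$ and a rotation to restore $G(z_0)=0,\ G'(z_0)>0$ produces a new competitor in $\mathcal{F}$, and a direct chain-rule computation shows $G'(z_0)=\tfrac{1+|w_0|}{2\sqrt{|w_0|}}\,F'(z_0)>F'(z_0)=s$, because $\tfrac{1+t}{2\sqrt{t}}>1$ for $t\in(0,1)$; this contradicts the maximality of $s$. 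Hence $F$ is onto, giving the desired conformal map, while uniqueness was already settled above. The only delicate points are the derivative bookkeeping in this last step and verifying that each composition remains holomorphic and injective on $\Omega$.
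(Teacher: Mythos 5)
Your proof is correct, but there is nothing in the paper to compare it against: the paper states this result (Theorem 2.3) as the classical Riemann mapping theorem and recalls it without proof, relying on standard references such as Ahlfors \cite{a}. What you have written out is precisely the standard Koebe--Carath\'eodory extremal argument: uniqueness via the Schwarz lemma applied to the automorphism $F_1\circ F_2^{-1}$ together with the positivity normalization, nonemptiness of the competing family by the square-root trick on $z-a$, normality by Montel, injectivity of the limit by Hurwitz, and surjectivity by showing that omitting a value $w_0$ lets one build, via $\psi_{w_0}$, a square root, and a renormalizing automorphism, a competitor whose derivative at $z_0$ is magnified by the factor $\frac{1+|w_0|}{2\sqrt{|w_0|}}>1$ (your computation of this factor is right: $|G'(z_0)|=\frac{(1-|w_0|^2)s}{2\sqrt{|w_0|}}$ and $|\psi_\beta'(\beta)|=\frac{1}{1-|\beta|^2}$ with $|\beta|=\sqrt{|w_0|}$ combine to give it, though note the factor belongs to the normalized map $\psi_\beta\circ G$, not to $G$ itself as your wording momentarily suggests). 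In fact your extremal characterization is exactly what underlies Remark 2.4 of the paper --- that any injective holomorphic $f:\Omega\to\mathbb{B}$ with $f(z_0)=0$, $f'(z_0)>0$ satisfies $F'(z_0)\geq f'(z_0)$ --- so your proof supplies, as a byproduct, the one property of $F$ that the paper actually uses in proving its main Theorem 1.3. Two small points worth tightening if you write this up fully: after the M\"obius map sending the omitted disk's complement into $\overline{\mathbb{B}}$ you should compose with a slight contraction or invoke openness to land strictly inside $\mathbb{B}$, and in the limit step you should say explicitly that $F'(z_0)=s>0$ rules out the constant alternative in Hurwitz's theorem.
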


\begin{remark}
It has been shown in \cite{a} that
if $f:\Omega\rightarrow \mathbb{B}$ holomorphic, injective, $f(z_{0})=0$ and $f'(z_{0})>0$, then $F'(z_{0})\geq f'(z_{0})$.
\end{remark}

\section{ hyperbolic space of dimension two }
Recall that the Poincar\'e conformal
disc model of dimension two is the unit ball
\[\mathbb{B}=\{z=x+iy\in\mathbb{C}: |z|<1\}\]
equipped with the usual Poincar\'e metric
\[
ds^{2}=\frac{4(dx^{2}+dy^{2})}{(1-|z|^{2})^{2}}.
\]
The hyperbolic volume element is
\[
dV=\left(\frac{2}{1-|z|^{2}}\right)^{2}dxdy.\]The associated
Laplace-Beltrami operator is given by
\[
\Delta_{\mathbb{H}}=\frac{(1-|z|^{2})^{2}}{4}\left(\frac{\partial^{2}}{\partial
x^{2}}+\frac{\partial^{2}}{\partial
y^{2}}\right)
\]
and the corresponding hyperbolic gradient is
\[
\nabla_{\mathbb{H}}=\frac{1-|z|^{^{2}}}{2}\nabla=\frac{1-|z|^{^{2}}}{2}\left(\frac{\partial}{\partial
x},\frac{\partial}{\partial y}\right).
\]
It is easy to check that
\[
\int_{\mathbb{B}}|\nabla u|^{2}dxdy-\int_{\mathbb{B}}\frac{u^{2}}{(1-|z|^{2})^{2}}dxdy=\int_{\mathbb{B}}|\nabla_{\mathbb{H}} u|^{2}dV-\frac{1}{4}\int_{\mathbb{B}}u^{2}dV.
\]

For $z_{1},z_{2}\in\mathbb{B}^{n}$, we denote by $\rho(z_{1},z_{2})$ the associated
distance  from $z_{1}$ to $z_{2}$ in $\mathbb{B}^{n}$. It is well known that
\begin{equation}\label{3.1}
\begin{split}
\rho(z_{1},z_{2})=2\tanh^{-1}\left(\frac{|z_{1}-z_{2}|}{\sqrt{1-2\textrm{Re}(z_{1}\overline{z_{2}})+|z_{1}|^{2}|z_{2}|^{2}}}\right)^{\frac{1}{2}}.
\end{split}
\end{equation}
In particular, if $z_{2}=0$, then $\rho(z_{1},0)=\frac{1}{2}\log\frac{1+|z_{1}|}{1-|z_{1}|}$.
Furthermore, the polar coordinates associated with $\rho$ is
\[
\int_{\mathbb{B}}fdxdy=\int^{\infty}_{0}\int^{2\pi}_{0}f\sinh\rho d\rho d\theta,\;\;f\in L^{1}(\mathbb{B}).
\]

Let $e^{t\Delta_{\mathbb{H}}}$ be the heat kernel on $\mathbb{B}$. It is known that $e^{t\Delta_{\mathbb{H}}}$ depends only on $t$ and $\rho$. The explicit formula is (see e.g. \cite{d})
\[
h(t,\rho):=e^{t\Delta_{\mathbb{H}}}=\frac{\sqrt{2}}{8\pi^{\frac{3}{2}}}t^{-\frac{3}{2}}e^{-\frac{t}{4}}\int^{+\infty}_{\rho}\frac{re^{-\frac{r^{2}}{4t}}}{\sqrt{\cosh r-\cosh\rho}}dr.
\]
Via the heat kernel, the fractional power
\begin{equation}\label{3.2}
\begin{split}
(-\Delta_{\mathbb{H}}-1/4)^{-\frac{1}{2}}=&\frac{1}{\Gamma(1/2)}\int^{+\infty}_{0}t^{-\frac{1}{2}}e^{t(\Delta_{\mathbb{H}}+1/4)}dt\\
=&\frac{\sqrt{2}}{8\pi^{2}}\int^{+\infty}_{\rho}\frac{r}{\sqrt{\cosh r-\cosh\rho}}dr\int^{+\infty}_{0}t^{-2}e^{-\frac{r^{2}}{4t}}dt\\
=&\frac{\sqrt{2}}{2\pi^{2}}\int^{+\infty}_{\rho}\frac{1}{r\sqrt{\cosh r-\cosh\rho}}dr.
\end{split}
\end{equation}

\begin{remark}
The operator $-\Delta_{\mathbb{H}}-1/4$ has been studied earlier  by Beckner \cite{be}. In fact, it has been shown by Beckner that for $F\in C^{\infty}_{0}(M)$,
\begin{equation}\label{1.6}
\left[\|F\|_{L^{6}(M)}\right]^{2}\leq 4\pi^{-2/3}\left[\int_{M}|DF|^{2}d\nu-\frac{3}{16}\int_{M}F^{2}d\nu\right];
\end{equation}
\begin{equation}\label{1.7}
\left[\|F\|_{L^{6}(M)}\right]^{2}\leq \frac{4}{3}\pi^{-2/3}\left[\int_{M}|DF|^{2}d\nu-\frac{1}{4}\int_{M}F^{2}d\nu\right];
\end{equation}
\begin{equation}\label{1.8}
\left[\|F\|_{L^{4}(M)}\right]^{2}\leq 2\pi^{-1/2}\left[\int_{M}|DF|^{2}d\nu-\frac{1}{4}\int_{M}F^{2}d\nu\right],
\end{equation}
where $M$ is the half space model of two-dimensional hyperbolic space, $DF=\frac{1}{y}\nabla F$ and $d\nu=\frac{1}{y^{2}}dxdy$.
It seems that inequalities (\ref{1.6}) and (\ref{1.7}) would  be contradictory since both estimates are sharp as limiting forms. However, as pointed out by Beckner, the right-hand side of inequality (\ref{1.7})  is to be evaluated as limiting forms for functions that
may not be in $L^{2}(M)$.
\end{remark}

\begin{lemma} Set $\phi(\rho)=(-\Delta_{\mathbb{H}}-1/4)^{-\frac{1}{2}}$. Then for $\rho>0$,
\begin{equation}\label{3.3}
\begin{split}
\phi(\rho)\leq\frac{1}{4\pi\sinh\frac{\rho}{2}};
\end{split}
\end{equation}
\begin{equation}\label{3.4}
\begin{split}
\phi(\rho)\leq\frac{1}{2\pi\rho\sinh\frac{\rho}{2}}.
\end{split}
\end{equation}
\end{lemma}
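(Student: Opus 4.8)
The plan is to work directly from the integral representation in \eqref{3.2}, namely
\[
\phi(\rho)=\frac{\sqrt{2}}{2\pi^{2}}\int_{\rho}^{+\infty}\frac{1}{r\sqrt{\cosh r-\cosh\rho}}\,dr,
\]
and to reduce both estimates \eqref{3.3} and \eqref{3.4} to one elementary integral. The key algebraic observation is the half-angle identity $\cosh x=1+2\sinh^{2}\frac{x}{2}$, which turns the radicand into a difference of squares:
\[
\cosh r-\cosh\rho=2\Bigl(\sinh^{2}\tfrac{r}{2}-\sinh^{2}\tfrac{\rho}{2}\Bigr).
\]
I would then substitute $u=\sinh\frac{r}{2}$, writing $a=\sinh\frac{\rho}{2}$, so that $dr=2\,du/\sqrt{1+u^{2}}$, $\sqrt{\cosh r-\cosh\rho}=\sqrt{2}\,\sqrt{u^{2}-a^{2}}$, and the range $r\in[\rho,\infty)$ becomes $u\in[a,\infty)$. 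The workhorse computation, obtained by $u=a\sec\theta$, is
\[
\int_{a}^{+\infty}\frac{du}{u\sqrt{u^{2}-a^{2}}}=\frac{\pi}{2a}.
\]

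For the estimate \eqref{3.4} the analysis is routine. On $[\rho,\infty)$ one has $\frac{1}{r}\le\frac{1}{\rho}$, so after the substitution and the elementary bound $\sqrt{1+u^{2}}\ge u$ the integral is controlled by
\[
\frac{1}{\rho}\int_{\rho}^{+\infty}\frac{dr}{\sqrt{\cosh r-\cosh\rho}}
=\frac{\sqrt{2}}{\rho}\int_{a}^{+\infty}\frac{du}{\sqrt{(u^{2}-a^{2})(1+u^{2})}}
\le\frac{\sqrt{2}}{\rho}\int_{a}^{+\infty}\frac{du}{u\sqrt{u^{2}-a^{2}}}.
\]
Inserting the workhorse integral and multiplying by $\frac{\sqrt{2}}{2\pi^{2}}$ yields $\phi(\rho)\le\frac{1}{2\pi\rho\sinh\frac{\rho}{2}}$, which is \eqref{3.4}.

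The genuinely delicate point, and the only real obstacle, is \eqref{3.3}, where the weight $\frac{1}{r}$ must be handled without sacrificing a factor of $\rho$. After the substitution one has $r=2\sinh^{-1}u$, and the claim reduces to the pointwise inequality
\[
\frac{u}{\sqrt{1+u^{2}}}\le\sinh^{-1}u,\qquad u\ge 0.
\]
Setting $t=\sinh^{-1}u$, so that $u=\sinh t$ and $\sqrt{1+u^{2}}=\cosh t$, this is precisely $\tanh t\le t$ for $t\ge 0$, which is elementary. This estimate gives $\frac{1}{\sinh^{-1}(u)\sqrt{1+u^{2}}}\le\frac{1}{u}$, allowing a termwise comparison of the integrand with $\frac{1}{u\sqrt{u^{2}-a^{2}}}$. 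The workhorse integral then produces
\[
\phi(\rho)\le\frac{\sqrt{2}}{2\pi^{2}}\cdot\frac{1}{\sqrt{2}}\int_{a}^{+\infty}\frac{du}{u\sqrt{u^{2}-a^{2}}}
=\frac{\sqrt{2}}{2\pi^{2}}\cdot\frac{\pi}{2\sqrt{2}\,a}
=\frac{1}{4\pi\sinh\frac{\rho}{2}},
\]
establishing \eqref{3.3}. Thus the whole argument hinges on the clean reduction via $u=\sinh\frac{r}{2}$ together with the comparison $\tanh t\le t$, which is exactly what compensates for the troublesome $\frac{1}{r}$ factor in the sharper bound.
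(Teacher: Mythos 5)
Your proof is correct and follows essentially the same route as the paper: your key pointwise inequality $\frac{u}{\sqrt{1+u^{2}}}\le\sinh^{-1}u$, i.e.\ $\tanh t\le t$, is exactly the paper's bound $\sinh\frac{r}{2}\le\frac{r}{2}\cosh\frac{r}{2}$ read through your substitution $u=\sinh\frac{r}{2}$, and your treatment of \eqref{3.4} via $\frac{1}{r}\le\frac{1}{\rho}$ together with $u\le\sqrt{1+u^{2}}$ matches the paper's use of $\sinh\frac{r}{2}\le\cosh\frac{r}{2}$. The only cosmetic difference is that you evaluate the resulting integral $\int_{a}^{\infty}\frac{du}{u\sqrt{u^{2}-a^{2}}}=\frac{\pi}{2a}$ by an explicit trigonometric substitution, where the paper writes down the $\arctan$ antiderivative directly.
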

\begin{proof} Since for $\rho>0$, $\sinh\frac{\rho}{2}\leq\frac{\rho}{2}\cosh\frac{\rho}{2}$, we have, for $\rho>0$,
\begin{equation}\label{3.5}
\begin{split}
&\int^{+\infty}_{\rho}\frac{1}{r\sqrt{\cosh r-\cosh\rho}}dr\leq\int^{+\infty}_{\rho}\frac{\cosh\frac{r}{2}}{2\sinh\frac{r}{2}\sqrt{\cosh r-\cosh\rho}}dr\\
=&\int^{+\infty}_{\rho}\frac{\cosh\frac{r}{2}}{2\sinh\frac{r}{2}\sqrt{2\sinh^{2}\frac{r}{2}-2\sinh^{2}\frac{\rho}{2}}}dr\\
=&\frac{1}{\sqrt{2}\sinh\frac{\rho}{2}}\arctan\left(\sinh^{2}\frac{r}{2}-\sinh^{2}\frac{\rho}{2}\right)\left|^{+\infty}_{\rho}\right.\\
=&\frac{\pi}{2\sqrt{2}\sinh\frac{\rho}{2}}.
\end{split}
\end{equation}
Combing (\ref{3.2}) and (\ref{3.5}) yields (\ref{3.3}).

\medskip

Similarly, using the fact $\sinh\frac{\rho}{2}\leq\cosh\frac{\rho}{2}$, we have
\begin{equation}\label{3.6}
\begin{split}
&\int^{+\infty}_{\rho}\frac{1}{r\sqrt{\cosh r-\cosh\rho}}dr\leq\frac{1}{\rho}\int^{+\infty}_{\rho}\frac{1}{\sqrt{\cosh r-\cosh\rho}}dr\\
\leq&\frac{1}{\rho}\int^{+\infty}_{\rho}\frac{\cosh\frac{r}{2}}{\sinh\frac{r}{2}\sqrt{\cosh r-\cosh\rho}}dr\\
=&\frac{1}{\rho}\int^{+\infty}_{\rho}\frac{\cosh\frac{r}{2}}{\sinh\frac{r}{2}\sqrt{2\sinh^{2}\frac{r}{2}-2\sinh^{2}\frac{\rho}{2}}}dr\\
=&\frac{\sqrt{2}}{\rho\sinh\frac{\rho}{2}}\arctan\left(\sinh^{2}\frac{r}{2}-\sinh^{2}\frac{\rho}{2}\right)\left|^{+\infty}_{\rho}\right.\\
=&\frac{\sqrt{2}\pi}{2\rho\sinh\frac{\rho}{2}}.
\end{split}
\end{equation}
Combing (\ref{3.2}) and (\ref{3.6}) yields (\ref{3.4}).
\end{proof}

We now recall the rearrangement of a real functions on $\mathbb{B}$.  Suppose $f$ is
a real  function on $\mathbb{B}$. The non-increasing rearrangement of $f$
is defined by
\begin{equation}\label{3.7}
f^{\ast}(t)=\inf\{s>0: \lambda_{f}(s)\leq t\},
\end{equation}
where $$\lambda_{f}(s)=|\{z\in \mathbb{B}: |f(z)|>s\}|=\int_{\{z\in \mathbb{B}:: |f(z)|>s\}}\left(\frac{2}{1-|z|^{2}}\right)^{2}dxdy.$$
 Here we use the
notation $|\Sigma|$ for the measure of a measurable set
$\Sigma\subset \mathbb{B}$.

\begin{lemma}
Set $\phi(\rho)=(-\Delta_{\mathbb{H}}-1/4)^{-\frac{1}{2}}$.  Then, for $t>0$,
\begin{equation}\label{3.8}
\phi^{\ast}(t)\leq \frac{1}{\sqrt{4\pi t}}
\end{equation}
and, for each $a>0$,
\begin{equation}\label{3.9}
\int^{\infty}_{a}|\phi^{\ast}(t)|^{2}dt<\infty.
\end{equation}

\end{lemma}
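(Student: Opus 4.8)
The plan is to reduce both statements to the explicit pointwise bounds on $\phi(\rho)$ recorded in Lemma 3.3, combined with the elementary fact that the non-increasing rearrangement of a \emph{radially decreasing} function is obtained simply by reparametrizing it by hyperbolic volume. Introduce the two majorants $g(\rho)=\frac{1}{4\pi\sinh\frac{\rho}{2}}$ and $h(\rho)=\frac{1}{2\pi\rho\sinh\frac{\rho}{2}}$ coming from $(\ref{3.3})$ and $(\ref{3.4})$; both are strictly decreasing in $\rho$ and hence define radially decreasing functions on $\mathbb{B}$. Since $0\le\phi(\rho)\le g(\rho)$ and $0\le\phi(\rho)\le h(\rho)$ pointwise on $\mathbb{B}$, the monotonicity of the rearrangement under pointwise domination gives $\phi^{\ast}\le g^{\ast}$ and $\phi^{\ast}\le h^{\ast}$, so it suffices to estimate $g^{\ast}$ and $h^{\ast}$. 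The advantage of this route is that no monotonicity of $\phi$ itself is required, only that of the explicit majorants.

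First I would compute the hyperbolic volume of a geodesic ball. By the polar-coordinate formula, the ball $\{\rho<R\}$ has measure $\int_{0}^{R}2\pi\sinh r\,dr=2\pi(\cosh R-1)=4\pi\sinh^{2}\frac{R}{2}$. For a radially decreasing $w(\rho)$ the superlevel set $\{w>s\}$ is exactly such a ball, so its distribution function is the decreasing reparametrization of $w$ by volume, whence $w^{\ast}(t)=w(R(t))$ with $R(t)$ determined by $4\pi\sinh^{2}\frac{R(t)}{2}=t$, i.e. $\sinh\frac{R(t)}{2}=\sqrt{t/(4\pi)}$. Applying this to $g$ yields
\[
g^{\ast}(t)=\frac{1}{4\pi\sinh\frac{R(t)}{2}}=\frac{1}{4\pi}\sqrt{\frac{4\pi}{t}}=\frac{1}{\sqrt{4\pi t}},
\]
which together with $\phi^{\ast}\le g^{\ast}$ is precisely $(\ref{3.8})$.

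For the integrability statement $(\ref{3.9})$ the bound $(\ref{3.8})$ alone fails, since $\int_{a}^{\infty}\frac{dt}{4\pi t}$ diverges; this is exactly where the sharper factor $\frac{1}{\rho}$ in $(\ref{3.4})$ becomes essential. Applying the reparametrization to $h$ gives
\[
h^{\ast}(t)=\frac{1}{2\pi R(t)\sinh\frac{R(t)}{2}}=\frac{1}{\sqrt{\pi}\,R(t)\sqrt{t}},\qquad |h^{\ast}(t)|^{2}=\frac{1}{\pi R(t)^{2}\,t}.
\]
I would then change variables back from $t$ to $R$ via $t=2\pi(\cosh R-1)$, $dt=2\pi\sinh R\,dR$, and use $\sinh R=2\sinh\frac{R}{2}\cosh\frac{R}{2}$ to obtain
\[
\int_{a}^{\infty}|h^{\ast}(t)|^{2}\,dt=\frac{1}{\pi}\int_{R_{a}}^{\infty}\frac{\coth\frac{R}{2}}{R^{2}}\,dR,
\]
where $R_{a}>0$ satisfies $2\pi(\cosh R_{a}-1)=a$. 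The integrand is bounded near $R=R_{a}>0$ and is $O(R^{-2})$ as $R\to\infty$ because $\coth\frac{R}{2}\to1$, so the integral is finite; combined with $\phi^{\ast}\le h^{\ast}$ this proves $(\ref{3.9})$.

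The only point demanding care is the identity $w^{\ast}(t)=w(R(t))$ for radially decreasing $w$, namely that passing to the non-increasing rearrangement merely reparametrizes such a function by hyperbolic volume. This is a standard equimeasurability fact once the ball-volume formula $4\pi\sinh^{2}\frac{R}{2}$ is in hand, and it is applied only to the explicit majorants $g$ and $h$. The genuinely substantive choice is to invoke $(\ref{3.3})$ for the pointwise estimate and the strictly stronger $(\ref{3.4})$ for the square-integrability: the extra $\frac{1}{\rho}\sim\frac{1}{\log t}$ is exactly what converts a logarithmically divergent tail into a convergent one.
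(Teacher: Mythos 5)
Your proof is correct, and in substance it rests on the same ingredients as the paper's: the pointwise bounds (\ref{3.3}) and (\ref{3.4}), the ball-volume formula $t=2\pi(\cosh R-1)=4\pi\sinh^{2}\frac{R}{2}$, and the same change of variables back to the radial variable followed by the boundedness of $\coth\frac{R}{2}$ on $[R_{a},\infty)$; your closed forms $g^{\ast}(t)=\frac{1}{\sqrt{4\pi t}}$ and $|h^{\ast}(t)|^{2}=\frac{1}{\pi R(t)^{2}t}$, and the reduction of (\ref{3.9}) to $\frac{1}{\pi}\int_{R_{a}}^{\infty}R^{-2}\coth\frac{R}{2}\,dR<\infty$, all check out. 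The one genuine difference is where the rearrangement machinery is applied. The paper rearranges $\phi$ itself: it sets $t=\lambda_{\phi}(\phi^{\ast}(t))=2\pi(\cosh\rho_{\phi^{\ast}(t)}-1)$, where $\rho_{\phi^{\ast}(t)}$ solves $\phi(\rho)=\phi^{\ast}(t)$, and then evaluates (\ref{3.3}) and (\ref{3.4}) at that point. This tacitly assumes $\phi$ is a continuous, strictly decreasing function of $\rho$ with a continuous distribution function, so that the level equation (\ref{3.11}) is solvable and $t=\lambda_{\phi}(\phi^{\ast}(t))$ holds; this is true (e.g.\ it follows from the heat-kernel representation (\ref{3.2}) and monotonicity of $h(t,\rho)$ in $\rho$), but the paper leaves it unremarked. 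Your variant --- dominate $\phi$ pointwise by the explicit majorants $g$ and $h$, pass to $\phi^{\ast}\leq g^{\ast}$ and $\phi^{\ast}\leq h^{\ast}$ by monotonicity of rearrangement, and compute $g^{\ast}$, $h^{\ast}$ exactly --- needs radial monotonicity only for $g$ and $h$, where it is evident, so it is marginally more robust at no extra cost. You also correctly identify the structural point that (\ref{3.8}) alone cannot yield (\ref{3.9}) and that the extra factor $\frac{1}{\rho}\sim\frac{1}{\log t}$ supplied by (\ref{3.4}) is precisely what converts the logarithmically divergent tail into a convergent one, which is the same mechanism driving the paper's estimate (\ref{3.16}).
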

\begin{proof}
Define, for any $s>0$,
\begin{equation}\label{3.10}
\lambda_{\phi}(s)=\int_{\{\phi(\rho)>s\}}dV=2\pi\int^{\rho_{s}}_{0}\sinh\rho d\rho,
\end{equation}
where $\rho_{s}$ is the solution of equation
\begin{equation}\label{3.11}
\phi(\rho)=s.
\end{equation}
Therefore, since $\phi^{\ast}(t)=\inf\{s>0: \lambda_{\phi}(s)\leq t\}$, we have
\begin{equation}\label{3.12}
t=\lambda_{\phi}(\phi^{\ast}(t))=2\pi\int^{\rho_{\phi^{\ast}(t)}}_{0}\sinh\rho d\rho=2\pi(\cosh\rho_{\phi^{\ast}(t)}-1),
\end{equation}
where $\rho_{g^{\ast}(t)}$ satisfies
\begin{equation}\label{3.13}
\phi( \rho_{\phi^{\ast}(t)})=\phi^{\ast}(t).
\end{equation}
By (\ref{3.3})
\begin{equation}\label{3.14}
\begin{split}
\phi^{\ast}(t)=&\phi( \rho_{\phi^{\ast}(t)})\leq\frac{1}{4\pi\sinh\frac{\rho_{\phi^{\ast}(t)}}{2}}.
\end{split}
\end{equation}
Combing (\ref{3.12}) and (\ref{3.14}) yields
\begin{equation}\label{3.15}
\begin{split}
t|\phi^{\ast}(t)|^{2}\leq\frac{2\pi(\cosh\rho_{\phi^{\ast}(t)}-1)}{16\pi^{2}\sinh^{2}\frac{\rho_{\phi^{\ast}(t)}}{2}}=\frac{1}{4\pi}.
\end{split}
\end{equation}
This proves inequality (\ref{3.8}).

\medskip

Now we prove (\ref{3.9}). Using the substitution $t=2\pi(\cosh\rho_{\phi^{\ast}(t)}-1)$, we have, by (\ref{3.4}) and (\ref{3.13}),
\begin{equation}\label{3.16}
\begin{split}
\int^{\infty}_{a}|\phi^{\ast}(t)|^{2}dt=&\int^{\infty}_{b}|\phi( \rho_{\phi^{\ast}(t)})|^{2}2\pi\sinh \rho_{\phi^{\ast}(t)}d \rho_{\phi^{\ast}(t)}\\
\leq& \int^{\infty}_{b}\frac{\sinh \rho_{\phi^{\ast}(t)}}{\rho^{2}_{\phi^{\ast}(t)}\sinh^{2} \frac{\rho_{\phi^{\ast}(t)}}{2}}d \rho_{\phi^{\ast}(t)}=\int^{\infty}_{b}\frac{\sinh s}{s^{2}\sinh^{2} \frac{s}{2}}ds\\
=&2\int^{\infty}_{b}\frac{\cosh \frac{s}{2}}{s^{2}\sinh \frac{s}{2}}ds,
\end{split}
\end{equation}
where $b>0$ satisfies $a=2\pi(\cosh b-1)$.
Since $\lim\limits_{s\rightarrow+\infty}\frac{\cosh \frac{s}{2}}{\sinh \frac{s}{2}}=1$, there exists a positive constant $C_{b}$ such that
$\frac{\cosh \frac{s}{2}}{\sinh \frac{s}{2}}\leq C_{b}$ for all $s\in [b,+\infty)$.
Therefore, by (\ref{3.16}),
\[
\int^{\infty}_{a}|\phi^{\ast}(t)|^{2}dt\leq2C_{b}\int^{\infty}_{b}\frac{1}{s^{2}}ds=\frac{2C_{b}}{b}.
\]
The desired result follows.
\end{proof}

\section{proof of Theorem 1.1 and Theorem 1.2 }

Before we prove the theorems, we need the following lemma from
Adams' paper \cite{ad}.

\begin{lemma}
Let  $a(s,t)$ be a non-negative measurable function on
$(-\infty,+\infty)\times[0,+\infty)$ such that (a.e.)
\[
a(s,t)\leq1,\;\;when\;\;0<s<t,
\]
\[
\sup_{t>0}\left(\int^{0}_{-\infty}a(s,t)^{n'}ds+\int^{\infty}_{t}a(s,t)^{n'}ds\right)^{1/n'}=b<\infty,
\]
where $n'=\frac{n}{n-1}$. Then there is a constant
$c_{0}=c_{0}(n,b)$ such that if for $\phi\geq0$ with
$\int^{\infty}_{-\infty}\phi(s)^{n}ds\leq1$, then
\[
\int^{\infty}_{0}e^{-F(t)}dt\leq c_{0},
\]
where
\[
F(t)=t-\left(\int^{\infty}_{-\infty}a(s,t)\phi(s)ds\right)^{n'}.
\]
\end{lemma}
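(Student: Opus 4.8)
The plan is to fix $\phi\ge 0$ with $\int_{-\infty}^{\infty}\phi^{n}\,ds\le 1$, put $u(t)=\int_{-\infty}^{\infty}a(s,t)\phi(s)\,ds$ so that $F(t)=t-u(t)^{n'}$, and prove the bound by controlling how often $F$ can be small. First I would split the integral defining $u(t)$ into the diagonal strip $s\in(0,t)$ and the two tails $s\in(-\infty,0)\cup(t,\infty)$. On the strip the hypothesis $a\le 1$ gives $\int_{0}^{t}a\phi\le\int_{0}^{t}\phi\le t^{1/n'}\alpha(t)^{1/n}$ by Hölder, where $\alpha(t):=\int_{0}^{t}\phi^{n}\,ds$ is nondecreasing with $\alpha(\infty)\le 1$; on the tails Hölder together with the uniform bound $b$ gives a contribution at most $b(1-\alpha(t))^{1/n}$. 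This yields the pointwise estimate
\[
u(t)\le t^{1/n'}\alpha(t)^{1/n}+b\,(1-\alpha(t))^{1/n},
\]
and in particular $u(t)^{n'}\le t$, so that $F(t)\ge 0$ for every $t$.

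Since $F\ge 0$, I would pass to the distribution function by the layer-cake identity
\[
\int_{0}^{\infty}e^{-F(t)}\,dt=\int_{0}^{\infty}e^{-\lambda}\,\mu(\lambda)\,d\lambda,\qquad \mu(\lambda):=\bigl|\{t>0:\ u(t)^{n'}>t-\lambda\}\bigr|,
\]
which reduces everything to a \emph{linear} bound $\mu(\lambda)\le C(n,b)(1+\lambda)$; feeding this into the right-hand integral then produces the constant $c_{0}$. The part of the level set with $t\le C\lambda$ is harmless, and on the part where the energy fraction is not yet exhausted, say $\alpha(t)\le 1-\delta$, the displayed estimate gives $u(t)^{n'}\le t(1-\delta)^{1/(n-1)}+C$, so that $F(t)\ge c(\delta,n)\,t-C$ grows linearly and such $t$ leave the level set once $t\ge C\lambda$; these two regimes contribute only $O(\lambda)$.

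The heart of the matter, and the step I expect to be the main obstacle, is the \emph{near-critical} regime $\alpha(t)\to 1$. Here the crude bound $u(t)\le t^{1/n'}$ is far too lossy: by itself it only yields $F\ge 0$ and would falsely suggest that $e^{-F}$ need not be integrable. One has to exploit the full $L^{n}$ constraint, not merely a single application of Hölder. The correct mechanism is that $F$ can linger near level $0$ only on a set of bounded \emph{logarithmic} size: writing $F'(t)=1-n'\,u(t)^{n'-1}u'(t)$, on any long multiplicative stretch where $F$ stays within $\lambda$ of $0$ one is forced to have $u(t)\approx t^{1/n'}$ and density $\phi(t)\approx (n')^{-1}t^{-1/n}$, whence $\int\phi^{n}\,dt\approx (n')^{-n}\int\frac{dt}{t}$ across the stretch. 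Thus each stretch of multiplicative length $\kappa$ drains a definite portion $\sim (n')^{-n}\log\kappa$ of the unit budget, so the total logarithmic measure of the near-critical set is at most $C(n)$; outside it $F$ moves at a definite rate and cannot remain below level $\lambda$ for $t$-length exceeding $C\lambda$. Assembling these facts gives $\mu(\lambda)\le C(n,b)(1+\lambda)$.

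Finally, the off-diagonal contribution enters only through the factor $b(1-\alpha(t))^{1/n}$, which tends to $0$ precisely in the dangerous regime; it is therefore a lower-order perturbation throughout and affects only the constants. In effect the argument isolates the diagonal part, which is exactly Moser's classical one-dimensional borderline estimate, and treats the tails as a perturbation controlled by $b$.
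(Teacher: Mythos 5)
Your proposal should first be measured against the right target: the paper itself gives no proof of this lemma at all --- it is quoted verbatim from Adams \cite{ad}, so the comparison is with Adams' original argument. Your first half does match the standard skeleton of that argument: the splitting $u(t)\le \int_0^t\phi\,ds + b\,(1-\alpha(t))^{1/n}\le t^{1/n'}\alpha(t)^{1/n}+b\,(1-\alpha(t))^{1/n}$, the layer-cake reduction to a linear bound $\mu(\lambda)\le C(n,b)(1+\lambda)$ on the level sets of $F$, and the observation that the regime $\alpha(t)\le 1-\delta$ yields linear growth of $F$ are all correct and are exactly how the proof begins. One error here is small but real: your claim ``in particular $u(t)^{n'}\le t$, so $F\ge 0$'' is false for $b>0$; the two-term H\"older inequality $x\,\alpha^{1/n}+y\,(1-\alpha)^{1/n}\le (x^{n'}+y^{n'})^{1/n'}$ gives only $u(t)\le (t+b^{n'})^{1/n'}$, hence $F(t)\ge -b^{n'}$, and the layer-cake identity must be run from $\lambda=-b^{n'}$ rather than $0$ (harmless, since the linear bound on $\mu(\lambda)$ still makes $\int_{-b^{n'}}^{\infty}e^{-\lambda}\mu(\lambda)\,d\lambda$ finite with constant depending only on $n,b$).

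The genuine gap is your treatment of the near-critical regime, which is the heart of the lemma. First, the differential mechanism you invoke is not available: $a(s,t)$ is an arbitrary measurable kernel subject only to $a\le 1$ on $0<s<t$ and the tail bound $b$, so $u(t)=\int a(s,t)\phi(s)\,ds$ need not be differentiable in $t$, and $u'(t)$ has no pointwise relation to $\phi(t)$; the identification $\phi(t)\approx (n')^{-1}t^{-1/n}$ and the claim that ``outside the near-critical set $F$ moves at a definite rate'' are valid only for Moser's kernel $a=\mathbf{1}_{\{0<s<t\}}$. Second, and more seriously, even granting the heuristic, your conclusion --- that the near-critical set has bounded \emph{logarithmic} measure --- does not imply $\mu(\lambda)\le C(1+\lambda)$: a set of logarithmic measure $C$ located near $t\sim T$ can have Lebesgue measure of order $T(e^{C}-1)$, which is unbounded as $T\to\infty$. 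The plateau example $\phi=T^{-1/n}\mathbf{1}_{[0,T]}$ shows the issue concretely: there $E_\lambda\approx[T-c_n\lambda,\,T+\lambda]$ for \emph{every} $T$, so the required bound is an additive one, uniform in the location of the level set, and multiplicative accounting cannot deliver it. Adams' actual mechanism is purely integral and derivative-free: for $t\in E_\lambda$ the pointwise estimate forces the residual mass bound $1-\alpha(t)\le c(n)\,(1+\lambda+b^{n'})/t$; then, letting $t_1$ be the infimum of $E_\lambda\cap[C_0(1+\lambda+b^{n'}),\infty)$ and taking any other $t\in E_\lambda$, one splits $u(t)\le t_1^{1/n'}+(t-t_1)^{1/n'}\bigl(1-\alpha(t_1)\bigr)^{1/n}+b\bigl(1-\alpha(t)\bigr)^{1/n}$ and inserts the residual-mass bound at $t_1$; comparing with $u(t)^{n'}\ge t-\lambda$ then yields the additive estimate $t-t_1\le c(n)(1+\lambda+b^{n'})$, hence $\mu(\lambda)\le C(n,b)(1+\lambda)$. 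With this replacement of your third paragraph, the remainder of your outline goes through.
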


Now we prove Theorem 1.2. The main idea is to adapt the level set developed by Lam and the first author to derive a global Trudinger-Moser inequality
from a local one (see \cite{ll,ll2}). We firstly prove Theorem 1.2.\\

\textbf{Proof of Theorem 1.2}  Let $u\in C^{\infty}_{0}(\mathbb{B})$ be such that
$$\int_{\mathbb{B}}|\nabla u|^{2}dxdy-\int_{\mathbb{B}}\frac{u^{2}}{(1-|z|^{2})^{2}}dxdy=\int_{\mathbb{B}}|\nabla_{\mathbb{H}} u|^{2}dV-\frac{1}{4}\int_{\mathbb{B}}u^{2}dV\leq1.$$
 Set $\Omega(u)=\{z\in\mathbb{B}:|u(z)|\geq1\}$. By inequality (\ref{1.8}),
\begin{equation}\label{4.1}
\begin{split}
|\Omega(u)|=&\int_{\Omega(u)}dV\leq\int_{\mathbb{B}}|u(z)|^{4}dV\\
\leq& 4\pi^{-1}\left(\int_{\mathbb{B}}|\nabla_{\mathbb{H}} u|^{2}dV-\frac{1}{4}\int_{\mathbb{B}}u^{2}dV\right)^{2}\\
\leq& 4\pi^{-1}.
\end{split}
\end{equation}

we write
\begin{equation}\label{4.2}
\begin{split}
&\int_{\mathbb{B}}(e^{4\pi u^{2}}-1-4\pi u^{2})dV\\
=&\int_{\Omega(u)}(e^{4\pi u^{2}}-1-4\pi u^{2})dV+
\int_{\mathbb{B}\setminus\Omega(u)}(e^{4\pi u^{2}}-1-4\pi u^{2})dV\\
\leq&\int_{\Omega(u)}e^{4\pi u^{2}}dV+
\int_{\mathbb{B}\setminus\Omega(u)}(e^{4\pi u^{2}}-1-4\pi u^{2})dV.
\end{split}
\end{equation}
Notice that on the domain $\mathbb{B}\setminus\Omega(u)$, we have $|u(z)|<1$. Thus, by (\ref{4.1}) and (\ref{1.8}),
\begin{equation}\label{4.3}
\begin{split}
\int_{\mathbb{B}\setminus\Omega(u)}(e^{4\pi u^{2}}-1-4\pi u^{2})dV=&\int_{\mathbb{B}\setminus\Omega(u)}\sum^{\infty}_{n=2}\frac{(4\pi u^{2})^{n}}{n!}dV\\
\leq&\int_{\mathbb{B}\setminus\Omega(u)}\sum^{\infty}_{n=2}\frac{(4\pi )^{n}u^{4}}{n!}dV\\
\leq&\sum^{\infty}_{n=2}\frac{(4\pi )^{n}}{n!}\int_{\mathbb{B}}|u(z)|^{4}dV\\
\leq&e^{4\pi} 4\pi^{-1}\left(\int_{\mathbb{B}}|\nabla_{\mathbb{H}} u|^{2}dV-\frac{1}{4}\int_{\mathbb{B}}u^{2}dV\right)^{2}\\
\leq&4\pi^{-1}e^{4\pi}.
\end{split}
\end{equation}

To finish the proof, it is enough to show $\int_{\Omega(u)}e^{4\pi u^{2}}dV$ is bounded by some universal constant.
We rewrite
\begin{equation}\label{b4.1}
\begin{split}
\int_{\mathbb{B}}|\nabla_{\mathbb{H}} u|^{2}dV-\frac{1}{4}\int_{\mathbb{B}}u^{2}dV=\int_{\mathbb{B}}\left|\left(-\Delta_{\mathbb{H}}-\frac{1}{4}\right)^{\frac{1}{2}} u\right|^{2}dV\leq 1.
\end{split}
\end{equation}
Therefore, we can write $u$ as a potential via (\ref{3.2})
\begin{equation}\label{4.4}
\begin{split}
u(z)=&\int_{\mathbb{B}}v(\omega)\left(\frac{\sqrt{2}}{2\pi^{2}}\int^{+\infty}_{\rho(\omega,z)}\frac{1}{r\sqrt{\cosh r-\cosh\rho(\omega,z)}}dr\right)dV_{\omega}
\\
=&\int_{\mathbb{B}}v(\omega)\phi(\rho(\omega,z))dV_{\omega},
\end{split}
\end{equation}
where
\[
\phi(\rho(\omega,z))=\frac{\sqrt{2}}{2\pi^{2}}\int^{+\infty}_{\rho(\omega,z)}\frac{1}{r\sqrt{\cosh r-\cosh\rho(\omega,z)}}dr.
\]
Furthermore, by (\ref{b4.1}),  $v\in L^{2}(\mathbb{B})$ with $\int_{\mathbb{B}}v^{2}dV\leq1$.

\medskip

By O'Neil's lemma and (\ref{4.4}), we have, for all $t>0$,
\begin{equation}\label{4.5}
\begin{split}
u^{\ast}(t)\leq\frac{1}{t}\int^{t}_{0}v^{\ast}(s)ds\int^{t}_{0}\phi^{\ast}(s)ds+\int^{\infty}_{t}v^{\ast}(s)\phi^{\ast}(s)ds.
\end{split}
\end{equation}
Then, since $|\Omega(u)|\leq4\pi^{-1}<4$, we have, by (\ref{4.5})
\begin{equation}\label{4.6}
\begin{split}
&\int_{\Omega(u)}e^{4\pi u^{2}}dV=\int^{|\Omega(u)|}_{0}\exp(4\pi|u^{\ast}(t)|^{2})dt\\
\leq&\int^{4}_{0}\exp(4\pi|u^{\ast}(t)|^{2})dt\\
\leq&\int^{4}_{0}\exp\left(4\pi\left|\frac{1}{t}\int^{t}_{0}v^{\ast}(s)ds\int^{t}_{0}\phi^{\ast}(s)ds+
\int^{\infty}_{t}v^{\ast}(s)\phi^{\ast}(s)ds\right|^{2}\right)dt
\\
=&4\int^{\infty}_{0}\exp\left(-t+\frac{\pi}{4}\left|e^{t}\int^{4e^{-t}}_{0}v^{\ast}(s)ds\int^{4e^{-t}}_{0}\phi^{\ast}(s)ds+
4\int^{\infty}_{4e^{-t}}v^{\ast}(s)\phi^{\ast}(s)ds\right|^{2}\right)dt.
\end{split}
\end{equation}
To get the last equation, we use the substitution $t:=4e^{-t}$.
Next,  we change the variables
\begin{equation*}
\begin{split}
\psi(t)=&2e^{-t/2}v^{\ast}(4e^{-t});\\
\varphi(t)=&4\sqrt{\pi}e^{-t/2}\phi^{\ast}(4e^{-t}).
\end{split}
\end{equation*}
it is easy to check
\begin{equation}\label{4.7}
\begin{split}
e^{t}\int^{\infty}_{t}e^{-s/2}\psi(s)ds\int^{\infty}_{t}e^{-s/2}\varphi(s)ds=&\frac{\sqrt{\pi}}{2}e^{t}
\int^{4e^{-t}}_{0}v^{\ast}(s)ds\int^{4e^{-t}}_{0}\phi^{\ast}(s)ds;\\
\int^{t}_{-\infty}\psi(s)\varphi(s)ds=&2\sqrt{\pi}\int^{\infty}_{4e^{-t}}v^{\ast}(s)\phi^{\ast}(s)ds.
\end{split}
\end{equation}
Combing (\ref{4.6}) and (\ref{4.7}) yields
\begin{equation}\label{4.8}
\begin{split}
&\int_{\Omega(u)}e^{4\pi u^{2}}dV\leq\int^{4}_{0}\exp(4\pi|u^{\ast}(t)|^{2})dt\\
=&4\int^{\infty}_{0}\exp\left(-t+\frac{\pi}{4}\left|e^{t}\int^{4e^{-t}}_{0}v^{\ast}(s)ds\int^{4e^{-t}}_{0}\phi^{\ast}(s)ds+
4\int^{\infty}_{4e^{-t}}v^{\ast}(s)\phi^{\ast}(s)ds\right|^{2}\right)dt\\
=&4\int^{\infty}_{0}e^{-F(t)}dt,
\end{split}
\end{equation}
where
\[
F(t)=t-\left(e^{t}\int^{\infty}_{t}e^{-s/2}\psi(s)ds\int^{\infty}_{t}e^{-s/2}\varphi(s)ds+\int^{t}_{-\infty}\psi(s)\varphi(s)ds\right)^{2}.
\]
Using Lemma 3.2, we have
\begin{equation*}
\begin{split}
\sup_{s>0}\varphi(s)=&4\sup_{s>0}\{\sqrt{\pi}e^{-s/2}\phi^{\ast}(4e^{-s})\}\leq4\sup_{s>0}\left\{\sqrt{\pi}e^{-s/2}\frac{1}{\sqrt{4\pi4e^{-s}}}\right\}=1;\\
\int^{0}_{-\infty}|\psi(s)|^{2}ds=&16\pi\int^{0}_{-\infty}e^{-t}|\phi^{\ast}(4e^{-t})|^{2}dt=4\pi\int^{\infty}_{4}|\phi^{\ast}(t)|^{2}dt<\infty.
\end{split}
\end{equation*}
Furthermore,
\[
\int^{+\infty}_{-\infty}|\psi(s)|^{2}ds=\int^{\infty}_{0}|v^{\ast}(s)|^{2}ds=\int_{\mathbb{B}}|v(z)|^{2}dV\leq1.
\]
Thus, if we set
\[
a(s,t)=\left\{
         \begin{array}{ll}
           \varphi(s), & \hbox{$s<t$;} \\
           e^{t}(\int^{\infty}_{t}e^{-s/2}\varphi(s)ds)e^{-s/2}, & \hbox{$s>t$,}
         \end{array}
       \right.
\]
then by Lemma 4.1,
$\int_{\Omega(u)}e^{4\pi u^{2}}dV$ is bounded by some  constant which is independent of $u$ and $\Omega(u)$.
The proof of Theorem 1.2 is thereby completed.\\

\medskip

Now we can prove Theorem 1.1 via Theorem 1.2.

\textbf{Proof of Theorem 1.1} Let $u\in C^{\infty}_{0}(\mathbb{B})$ be such that
$$\int_{\mathbb{B}}|\nabla u|^{2}dxdy-\int_{\mathbb{B}}\frac{u^{2}}{(1-|z|^{2})^{2}}dxdy=\int_{\mathbb{B}}|\nabla_{\mathbb{H}} u|^{2}dV-\frac{1}{4}\int_{\mathbb{B}}u^{2}dV\leq1.$$ By Theorem 1.2, there exist a positive constant $C_{2}$ which is independent of $u$ such that
\[
\int_{\mathbb{B}}\frac{(e^{4\pi u^{2}}-1-4\pi u^{2})}{(1-|z|^{2})^{2}}dxdy\leq C_{2}.
\]
Therefore,
\begin{equation*}
\begin{split}
\int_{\mathbb{B}}e^{4\pi u^{2}}dxdy=&\int_{\mathbb{B}}(e^{4\pi u^{2}}-1-4\pi u^{2})dxdy++\int_{\mathbb{B}}dxdy+4\pi\int_{\mathbb{B}}u^{2}dxdy\\
\leq&\int_{\mathbb{B}}\frac{(e^{4\pi u^{2}}-1-4\pi u^{2})}{(1-|z|^{2})^{2}}dxdy+\int_{\mathbb{B}}dxdy+4\pi\int_{\mathbb{B}}u^{2}dxdy\\
\leq& C_{2}+|\mathbb{B}|+4\pi\cdot C^{-1}.
\end{split}
\end{equation*}
To get the last inequality, we use the improved Hardy inequality (see e.g. \cite{bm,wy})
\[
\int_{\mathbb{B}}|\nabla u|^{2}dxdy-\int_{\mathbb{B}}\frac{u^{2}}{(1-|z|^{2})^{2}}dxdy\geq C\int_{\mathbb{B}}u^{2}dxdy.
\]
The desired result follows.

\section{proof of  Theorem 1.3}

Since a convex domain in $\mathbb{R}^{2}$ is also simply connected, we have, by Riemann mapping theorem, there exists a conformal map $F:\Omega\rightarrow \mathbb{B}$. Therefore, by Theorem 1.2, Then there exists a constant $C_{3}$ such that for all
$u\in C^{\infty}_{0}(\Omega)$ with
\begin{equation}\label{b5.1}
\int_{\Omega}|\nabla_{} u|^{2}dxdy-\int_{\Omega}u^{2}\frac{|F'(z)|^{2}}{(1-|F(z)|^{2})^{2}}dxdy\leq1,
 \end{equation}
there holds
\begin{equation}\label{b5.2}
\int_{\Omega}(e^{4\pi u^{2}}-1-4\pi u^{2})\frac{|F'(z)|^{2}}{(1-|F(z)|^{2})^{2}}dxdy\leq C_{3}.
 \end{equation}

Next we  shall show that for each
$z_{0}\in\Omega$,
\begin{equation}\label{5.1}
\frac{|F'(z_{0})|^{2}}{(1-|F(z_{0})|^{2})^{2}}\geq \frac{1}{4d(z_{0},\partial\Omega)^{2}}.
  \end{equation}
Since $\Omega$ is  proper, there exists $z_{1}\in\partial\Omega$ such that $d(z_{0},\partial\Omega)=|z_{0}-z_{1}|$. Furthermore, since $\Omega$ is convex, $\Omega$  lies in the half-plane (see e.g.\cite{ow})
\[
H_{z_{0}}:=\left\{z\in\mathbb{C}: \textrm{Re}\frac{z-z_{1}}{z_{0}-z_{1}}>0\right\}.
\]
Now we construct a holomorphic and injective from $H_{z_{0}}$ into $\mathbb{B}$. Via Example 2.1, it is easy to check
\[
f_{z_{0}}(z):=\frac{\frac{z-z_{1}}{z_{0}-z_{1}}-1}{\frac{z-z_{1}}{z_{0}-z_{1}}+1}\cdot\frac{z_{0}-z_{1}}{|z_{0}-z_{1}|}=\frac{z-z_{0}}{z+z_{0}-2z_{1}}\cdot\frac{z_{0}-z_{1}}{|z_{0}-z_{1}|}
\]
is  such a function. Furthermore,
\begin{equation}\label{5.2}
f_{z_{0}}(z_{0})=0,\;\;\;\;f'_{z_{0}}(z_{0})=\frac{1}{2|z_{0}-z_{1}|}=\frac{1}{2d(z_{0},\partial\Omega)}.
\end{equation}

Set
$$G(z)=-\frac{|F'(z_{0})|}{F'(z_{0})}\psi_{F(z_{0})}(F(z))=-\frac{|F'(z_{0})|}{F'(z_{0})}\frac{F(z_{0})-F(z)}{1-\overline{F(z_{0})}F(z)},$$
where $\psi_{F(z_{0})}$ is defined in Example 2.2. Since $F:\Omega\rightarrow \mathbb{B}$ be a conformal  map,
so does $G$. Furthermore,
\[
G(z_{0})=0,\;\;\;\;G'(z_{0})=\frac{|F'(z_{0})|}{1-|F(z_{0})|^{2}}>0.
\]
Therefore, by Remark 2.4,
$$\frac{|F'(z_{0})|}{1-|F(z_{0})|^{2}}=G'(z_{0})\geq f'_{z_{0}}(z_{0})=\frac{1}{2d(z_{0},\partial\Omega)}.$$

Let $u\in C^{\infty}_{0}(\Omega)$ be such that
\[
\int_{\Omega}|\nabla_{} u|^{2}dxdy-\frac{1}{4}\int_{\Omega}\frac{u^{2}}{d(z,\partial\Omega)^{2}}dxdy\leq1.
\]
Then
\[
\int_{\Omega}|\nabla_{} u|^{2}dxdy-\int_{\Omega}u^{2}\frac{|F'(z)|^{2}}{(1-|F(z)|^{2})^{2}}dxdy\leq\int_{\Omega}|\nabla_{} u|^{2}dxdy-\frac{1}{4}\int_{\Omega}\frac{u^{2}}{d(z,\partial\Omega)^{2}}dxdy\leq1.
\]
By (\ref{b5.1}) and (\ref{b5.2}), we have
\[
\int_{\Omega}\frac{e^{4\pi u^{2}}-1-4\pi u^{2}}{d(z,\partial\Omega)^{2}}dxdy\leq4\int_{\Omega}(e^{4\pi u^{2}}-1-4\pi u^{2})\frac{|F'(z)|^{2}}{(1-|F(z)|^{2})^{2}}dxdy\leq4 C_{3}.
\]
Furthermore,
if $\Omega$ is bounded, there exists a positive constant $M$ such that for all $z\in\Omega$, $d(z,\partial\Omega)\leq M$. Therefore,
for each
$u\in C^{\infty}_{0}(\Omega)$ with
\[
\int_{\Omega}|\nabla_{} u|^{2}dxdy-\frac{1}{4}\int_{\Omega}\frac{u^{2}}{d(z,\partial\Omega)^{2}}dxdy\leq1,
\]
we have
\begin{equation*}
\begin{split}
\int_{\Omega}e^{4\pi u^{2}}dxdy=&\int_{\Omega}(e^{4\pi u^{2}}-1-4\pi u^{2})dxdy++\int_{\Omega}dxdy+4\pi\int_{\Omega}u^{2}dxdy\\
\leq&M^{2}\int_{\Omega}\frac{(e^{4\pi u^{2}}-1-4\pi u^{2})}{d(z,\partial\Omega)^{2}}dxdy+\int_{\Omega}dxdy+4\pi\int_{\Omega}u^{2}dxdy\\
\leq& M^{2}C_{4}+|\Omega|+4\pi\cdot C^{-1}_{6}.
\end{split}
\end{equation*}
To get the last inequality, we use the improved Hardy inequality (see e.g. \cite{bm,ho})
\[
\int_{\Omega}|\nabla u|^{2}dxdy-\frac{1}{4}\int_{\Omega}\frac{u^{2}}{d(z,\partial\Omega)^{2}}dxdy\geq C_{6}\int_{\Omega}u^{2}dxdy.
\]
The desired result follows.

\end{document}